\numberwithin{equation}{section}
\newtheorem{Theorem}{Theorem}[section]
\newtheorem*{Theorem*}{Theorem}
\newtheorem{Lemma}[Theorem]{Lemma}
\theoremstyle{definition}
\newtheorem{Example}[Theorem]{Example}
\newtheorem*{question}{Main Question}
\newtheorem*{question2}{Technical Question}}
\def\CP{\mathbb{CP}}
\def\RR{\mathbb{R}}
\DeclareMathAlphabet{\zap}{OT1}{pzc}{m}{it}
\DeclareMathOperator{\Vol}{Vol}
\begin{document}
\allowdisplaybreaks

\renewcommand{\thefootnote}{}

\newcommand{\arXivNumber}{2302.12060}

\renewcommand{\PaperNumber}{027}

\FirstPageHeading

\ShortArticleName{Yamabe Invariants, Homogeneous Spaces, and Rational Complex Surfaces}

\ArticleName{Yamabe Invariants, Homogeneous Spaces, \\ and Rational Complex Surfaces\footnote{This paper is a~contribution to the Special Issue on Differential Geometry Inspired by Mathematical Physics in honor of Jean-Pierre Bourguignon for his 75th birthday. The~full collection is available at \href{https://www.emis.de/journals/SIGMA/Bourguignon.html}{https://www.emis.de/journals/SIGMA/Bourguignon.html}}}

\Author{Claude LEBRUN}

\AuthorNameForHeading{C.~LeBrun}

\Address{Department of Mathematics, Stony Brook University, Stony Brook, NY 11794-3651, USA}
\Email{\href{mailto:email@address}{claude@math.stonybrook.edu}}
\URLaddress{\url{http://www.math.stonybrook.edu/~claude/}}

\ArticleDates{Received February 23, 2023, in final form May 02, 2023; Published online May 07, 2023}

\Abstract{The Yamabe invariant is a diffeomorphism invariant of smooth compact manifolds that arises from the normalized Einstein--Hilbert functional. This article highlights the manner in which one compelling open problem regarding the Yamabe invariant appears to be closely tied to static potentials and the first eigenvalue of the Laplacian.}

\Keywords{scalar curvature; conformal structure; Yamabe problem; diffeomorphism invariant}

\Classification{53C21; 53C18; 14J26; 58J50}

\renewcommand{\thefootnote}{\arabic{footnote}}
\setcounter{footnote}{0}

\section{Introduction}

The {\em Yamabe invariant} of a smooth compact $n$-manifold $M$, $n\geq 3$, is a
fascinating real-valued diffeomorphism invariant $\mathscr{Y}(M)$ that is defined by applying a minimax procedure to the
{\em normalized Einstein--Hilbert functional}
\begin{equation*}%\label{nehf}
\mathscr{E} \big(M^n, g \big) = \frac{\int_M s_g \,{\rm d}\mu_g}{(\int_M {\rm d}\mu_g)^{1-\frac{2}{n}}},
\end{equation*}
on the space of Riemannian metrics $g$; here $s_g$ and ${\rm d}\mu_g$ respectively denote the scalar curvature and
$n$-dimensional volume measure of any Riemannian metric $g$. One key reason
for studying this functional is that the critical points of $\mathscr{E}$ are exactly the {\em Einstein metrics}~\cite{bes},
or in other words the metrics of constant Ricci curvature. However, $\mathscr{E}$ is neither bounded above nor below,
so one can never find a critical point by minimizing or maximizing the functional. Nonetheless, Yamabe discovered that
$\mathscr{E}$ {does} become bounded below whenever one restricts it to a conformal class
\begin{equation*}
\gamma = [g_0]= \big\{ g = {\zap f} g_0 \mid {\zap f}\colon M\stackrel{C^\infty}{\longrightarrow} \RR^+\big\},
\end{equation*}
and his influential posthumous paper \cite{yamabe} moreover claimed to prove
 that the {\em Yamabe constant} of each conformal class $\gamma$, as defined by
 \begin{equation*}
Y(M, \gamma ) := \inf \mathscr{E} |_\gamma ,
\end{equation*}
 is always achieved by some minimizing metric.
 Although Yamabe's article made a perilous analytic mistake, a series of fundamental papers by
Trudinger \cite{trud}, Aubin \cite{aubyam}, and Schoen \cite{rick} eventually proved that such
minimizers, now called {\em Yamabe metrics}, do exist in every $\gamma$.

The {\em Yamabe invariant} (originally introduced by Kobayashi \cite{okob} and Schoen \cite{sch}, but under different names)
is defined by
\begin{equation*}
\mathscr{Y}(M) = \sup_\gamma Y(M, \gamma ) = \sup_\gamma \inf_{g\in \gamma} \mathscr{E} (M, g),
\end{equation*}
where one takes the infimum of the Einstein--Hilbert functional in each conformal class, and then takes the supremum of these infima over
 all conformal classes on $M$.
The existence of minimizers in every conformal class then allows us to reword this, more geometrically, as
\begin{equation*}
\mathscr{Y}(M) = \sup \{ s_g \mid g \text{ is a unit-volume Yamabe metric on } M \}.
\end{equation*}
This supremum is moreover finite, because one of Aubin's fundamental contributions to the Yamabe problem
is that any conformal class automatically satisfies
\begin{equation*}
{Y}\big(M^n, \gamma \big) \leq \mathscr{E}\big(S^n, g_{\text{\tiny unit}} \big)= n(n-1)\pi \left[
\frac{2\sqrt{\pi}}{\Gamma (\frac{n+1}{2})}
\right]^{2/n},
\end{equation*}
where $g_{\text{\tiny unit}}$ is the standard metric on the unit $n$-sphere; it therefore follows that any
smooth compact $n$-manifold satisfies
\begin{equation*}
\mathscr{Y}\big(M^n \big) \leq \mathscr{Y}\big(S^n \big)= \mathscr{E}\big(S^n, g_{\text{\tiny unit}} \big).
\end{equation*}
This ``mountain-pass'' definition thus attaches
a real number $\mathscr{Y}(M)$ to every smooth compact manifold.
Because $\mathscr{Y}(M)>0$ iff $M$ carries a metric of positive scalar curvature,
 the quest to compute Yamabe invariants represents a % natural,
 quantitative refinement of the problem
of determining which manifolds admit positive-scalar-curvature metrics.

While any Yamabe metric has constant scalar curvature, it must be emphasized that the converse is not generally true.
Nonetheless, there are two important basic results that run in the converse direction.
First, metrics of constant scalar curvature $s\leq 0$ are always Yamabe, and indeed are, up to constant rescalings,
 the unique Yamabe metrics in their conformal classes. Second, a theorem of Obata~\cite{obata} guarantees that, on a compact manifold~$M$, any Einstein metric~$g$
is necessarily a Yamabe metric; indeed, up to constant rescalings, it is the {\em only} constant-scalar-curvature metric in its conformal class,
except in the special case where $\big(M^n, [g]\big)$ is the standard sphere $\big(S^n, [g_{\text{\tiny unit}}]\big)$.

The fact that a metric of constant positive scalar curvature is not necessarily Yamabe makes it surprisingly difficult
to calculate the Yamabe invariant $\mathscr{Y}(M)$ in the positive case. As a~consequence, there are very few manifolds
where the Yamabe invariant is positive, known, and different from that of the sphere. By contrast, there are
large classes of manifolds with $\mathscr{Y}\leq 0$ where the precise value of the Yamabe invariant is currently known.
For example, the Yamabe invariant is known~\cite{albaleb,lky} for the underlying $4$-manifold~$M$ of any compact complex surface $\big(M^4,J\big)$
of Kodaira dimension $\neq -\infty$. One key ingredient is that Seiberg--Witten theory allows one to show~\cite{lno} that
if $\big(M^4,J\big)$ admits a K\"ahler--Einstein metric~$g$ of non-positive Ricci curvature, then
\begin{equation*}
\mathscr{Y} (M) = \mathscr{E}(M, g)=
-4\pi\sqrt{2 c_1^2 (M,J)}.
\end{equation*}
On the other hand, $\mathscr{Y}$ turns out to be invariant under blowing up or down in this context, so that \cite{albaleb,lno,lky}
\begin{equation*}
\mathscr{Y} \big(M\# k \overline{\CP}_2\big) =\mathscr{Y} (M)
\end{equation*}
for any complex surface $\big(M^4,J\big)$ of Kodaira dimension $\neq -\infty$.

A faint echo of the
 above story about K\"ahler--Einstein metrics can still be heard in the positive case. Indeed, two different arguments involving
 spin$^c$ Dirac operators~\cite{gl1,lcp2} show that the Fubini--Study metric $g_{\text{\tiny FS}}$ on $\CP_2$ similarly
achieves the manifold's Yamabe invariant:
\begin{equation*}
\mathscr{Y} (\CP_2) = \mathscr{E}(\CP_2, g_{\text{\tiny FS}})= 12\pi \sqrt{2} < 8 \pi \sqrt{6} = \mathscr{Y} \big(S^4\big).
\end{equation*}
By contrast, however, the other conformally K\"ahler, positive
Einstein metrics on $4$-manifolds \cite{chenlebweb,lebuniq,sunspot,page,ty} all have energies $\mathscr{E}$ less than corresponding the Yamabe invariants.
 Indeed, most of these manifolds are blow-ups of the complex projective plane, and in all these cases induction on an inequality due to Kobayashi \cite[Theorem 2]{okob} implies that
 \begin{equation*}
\mathscr{Y} \big(\CP_2\# k \overline{\CP_2}\big) \geq \min \big\{ \mathscr{Y} (\CP_2), \mathscr{Y} \big( \overline{\CP_2}\big), \ldots , \mathscr{Y} \big( \overline{\CP_2}\big)\big\} = \mathscr{Y} (\CP_2),
\end{equation*}
which is larger than the energy of any of the Einstein metrics in question.
In fact, there is only one smooth compact $4$-manifold that carries a conformally K\"ahler, positive Einstein metric that is not covered by this argument, namely the
spin manifold $\CP_1 \times \CP_1$. This case is harder, and demands a very different set of techniques. However,
 B\"ohm, Wang, and Ziller~\cite{bwz} were able to settle this outstanding case by showing that the K\"ahler--Einstein metric on $\CP_1 \times \CP_1$ can be perturbed to yield
nearby Yamabe metrics of slightly higher energy.

When $M$ is the underlying smooth $4$-manifold of a compact complex surface $\big(M^4, J\big)$ of Kodaira dimension $\neq -\infty$,
we have already pointed out that
the Yamabe invariant $\mathscr{Y}(M)$ is
unchanged if $M$ is blown up or blown down. This makes it natural to ask whether this pattern
also holds for the compact complex surfaces that can be obtained from~$\CP_2$ by some
 sequence of blow-ups and blow-downs; these are the
{\em rational complex surfaces} of the title. As a step in this direction, this article will focus
 on the following looser question:

\begin{question} If a compact complex surface $\big(M^4, J\big)$ is obtained from $\CP_2$ by blowing up and down, is it necessarily true that $\mathscr{Y}(M)\geq \mathscr{Y}(\CP_2)$?
\end{question}

Once again, the only hard case is $S^2 \times S^2$, so
a positive answer would immediately follow from an affirmative answer to the following:

\begin{question2} Let $h_2$ be the homogeneous metric on $S^2 \times S^2$ that is the Riemannian product of two standard round metrics on $S^2$, one of radius $1$,
and one of radius $1/\sqrt{2}$. Is~$h_2$ a~Yamabe metric?
\end{question2}

This metric, which satisfies $\mathscr{E}(S^2\times S^2, h_2) = \mathscr{Y} (\CP_2)$, also carries a static potential \cite{ambrozio,corvino,lafontaine},
and thus enjoys an interestingly different special status for the behavior of the scalar curvature.
 Our approach to the above technical question combines elements of the B\"ohm--Wang--Ziller argument with some simple observations about the spectrum of the
 Laplacian. I hope that Jean-Pierre will enjoy the way this connects these questions to his own investigations of the scalar curvature.

\section{The bass-note of a Yamabe metric}

One of Jean-Pierre Bourguignon's early contributions \cite{bes,bourguignon} to the study of the scalar curvature
was the discovery that, on a compact manifold $M^n$, the linearized behavior of the scalar curvature~$s$ at a constant-scalar-curvature
metric $g$ depends on whether $s/(n-1)$
is an eigenvalue of the Laplace--Beltrami operator $\Delta_g$. However, it is less often noted
 that this question also reveals important information about whether the given metric $g$ is a Yamabe metric:

\begin{Lemma}\label{symptom}
If $g$ is a Yamabe metric on a smooth compact manifold $M$ of dimension $n\geq 3$, then
\begin{equation}\label{eigen}
\lambda_1 \geq \frac{s}{n-1},
\end{equation}
where $\lambda_1$ is the smallest positive eigenvalue of the Laplace--Beltrami operator $\Delta_{g} = {\rm d}^*{\rm d}$,
and where $s$ is the scalar curvature of $g$.
\end{Lemma}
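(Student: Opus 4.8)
The plan is to exploit the defining property of a Yamabe metric: it \emph{minimizes} the Einstein--Hilbert functional $\mathscr{E}$ within its conformal class, so the second variation of $\mathscr{E}|_{[g]}$ at $g$ must be non-negative. Concretely, I would parametrize the conformal class by writing $\tilde g = u^{4/(n-2)} g$ for smooth $u>0$, set $p = 2n/(n-2)$ and $a = 4(n-1)/(n-2)$, and record the classical formula
\begin{equation*}
\mathscr{E}\big(u^{4/(n-2)} g\big) = \frac{\int_M \big( a|\nabla u|^2 + s\,u^2\big)\,{\rm d}\mu}{\big( \int_M u^{p}\,{\rm d}\mu\big)^{2/p}},
\end{equation*}
in which all quantities on the right are computed with respect to the fixed Yamabe metric $g$; here the numerator is $\int_M u\, L_g u\,{\rm d}\mu$ for the conformal Laplacian $L_g = a\Delta_g + s$, and $s$ is constant because $g$ has constant scalar curvature. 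The metric $g$ itself corresponds to $u\equiv 1$, which by hypothesis is a minimizer of this functional.

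Next I would compute the Hessian at $u \equiv 1$ along a variation $u = 1 + t\phi$. Since $\mathscr{E}$ is invariant under $u\mapsto cu$, the constant functions span a trivial scaling direction, so it is natural---and simplifying---to restrict to variations with $\int_M \phi\,{\rm d}\mu = 0$. For such $\phi$ the first-order terms in both numerator and denominator drop out, and a direct differentiation of the quotient gives, up to a positive factor,
\begin{equation*}
\frac{{\rm d}^2}{{\rm d}t^2}\bigg|_{t=0}\mathscr{E}\big((1+t\phi)^{4/(n-2)} g\big) \ \propto\ a\int_M |\nabla\phi|^2\,{\rm d}\mu - s\,(p-2)\int_M \phi^2\,{\rm d}\mu .
\end{equation*}
The arithmetic then collapses neatly: $p - 2 = 4/(n-2)$, so $a/(p-2) = n-1$, and the bracket becomes a positive multiple of $\int_M |\nabla\phi|^2\,{\rm d}\mu - \tfrac{s}{n-1}\int_M \phi^2\,{\rm d}\mu$.

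Minimality forces this second variation to be $\geq 0$, so I obtain $\int_M |\nabla\phi|^2\,{\rm d}\mu \geq \tfrac{s}{n-1}\int_M \phi^2\,{\rm d}\mu$ for every $\phi$ orthogonal to the constants. Finally I would feed in a first eigenfunction of $\Delta_g$, which is necessarily mean-zero since it is $L^2$-orthogonal to the eigenspace of constants; for it $\int_M |\nabla\phi|^2\,{\rm d}\mu = \lambda_1\int_M \phi^2\,{\rm d}\mu$ by integration by parts, which yields $\lambda_1 \geq s/(n-1)$, that is, \eqref{eigen}. Equivalently, the displayed inequality for all mean-zero $\phi$ is exactly the Rayleigh-quotient characterization of $\lambda_1 \geq s/(n-1)$.

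I expect the only real work to be the second-variation bookkeeping---in particular, correctly isolating the scaling direction so that the first-order terms vanish, and tracking the conformal constants so that the coefficient $a/(p-2)$ simplifies to precisely $n-1$. Everything downstream of that computation is immediate; note also that when $s \leq 0$ the claim is trivial, since $\lambda_1 > 0$, so the content lies entirely in the positive-scalar-curvature regime.
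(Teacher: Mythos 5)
Your proof is correct and is essentially the paper's own argument: the paper likewise expands $\mathscr{E}\big((1+tf)^{p-2}g\big)$ to second order in $t$, with $f$ a first eigenfunction, and concludes (in contrapositive form) that a metric with $\lambda_1 < s/(n-1)$ cannot minimize $\mathscr{E}$ in its conformal class. Your reorganization---deriving the second-variation inequality for all mean-zero $\phi$ and then specializing to the eigenfunction, using the quadratic form $\int u L_g u\,{\rm d}\mu$ rather than the Yamabe equation's numerator $\int u\left[(p+2)\Delta u + su\right]{\rm d}\mu$, which agree after integration by parts---is only a cosmetic difference.
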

\begin{proof}
If $(M,g)$ is a compact Riemannian manifold of scalar curvature $s=\text{const}$, it suffices to prove the contrapositive statement that
\begin{equation*}
\lambda_1 < \frac{s}{n-1} \ \Longrightarrow \ \mbox{$g$ is not a Yamabe metric.}
\end{equation*}
To see this, first recall that, after setting $p:=\frac{2n}{n-2}$, so that
any metric $\widetilde{g}$ conformal to $g$ and its corresponding metric volume measure can be simultaneously expressed as
\begin{equation*}
\widetilde{g}= u^{p-2}g \qquad \mbox{and} \qquad \widetilde{{\rm d}\mu} = u^p \,{\rm d}\mu,
\end{equation*}
the scalar curvature $\widetilde{s} = s_{\tilde{g}}$ of the conformally rescaled metric can then be read off from the
{\em Yamabe equation}
\begin{equation}
\label{yammer}
\widetilde{s} u^{p-1} = \left[(p+2) \Delta + s\right] u.
\end{equation}
Now
assume that $s > (n-1) \lambda_1 > 0$, and suppose that $f\not\equiv 0$ belongs to the $\lambda_1$-eigenspace of the Laplacian. In particular, since
\begin{equation*}
\Delta f = \lambda_1 f,
\end{equation*}
where $\lambda_1 > 0$, it then follows that $\int_M f\,{\rm d}\mu_g=\frac{1}{\lambda_1}\int_M (\Delta f)\,{\rm d}\mu_g=0$.
Now set $u_t = 1 + t f$, and, for every small real number $t$, let $g_t = u_t^{p-2}g$.
Setting $V= \Vol (M,g)$ and letting $\fint = {V}^{-1} \int_M$, we then have
\begin{eqnarray*}
\mathcal{E}(g_t)&=&\frac{\int_M u_t \left[ (p+2)\Delta u_t + s u_t\right] {\rm d}\mu_g}{\left(\int_M u_{t}^p{\rm d}\mu_g\right)^{2/p}} \\
&=&\frac{V\fint \big[ t^2 (p+2)\lambda_1f^2 + s+ s t^2 f^2 \big] {\rm d}\mu_g}{\left(V\fint [1+t^2 \frac{p(p-1)}{2} f^2 +O(t^3)] {\rm d}\mu_g\right)^{2/p}} \\
&=&sV^{1-\frac{2}{p}} \frac{\left[ 1+ t^2 \big(1+ (p+2)\frac{\lambda_1}{s} \big)\fint f^2 {\rm d}\mu_g\right] }{\left[ 1+ \frac{p(p-1)}{2} t^2\fint f^2 {\rm d}\mu_g +O\big(t^3\big) \right]^{2/p}} \\
&=&sV^{2/n} \left[ 1+ t^2 \left(1+ (p+2)\frac{\lambda_1}{s} \right)\fint f^2 {\rm d}\mu_g\right] }{\left[ 1- (p-1) t^2\fint f^2 {\rm d}\mu_g +O\big(t^3\big) \right] \\
&=&sV^{2/n} \left[ 1+ t^2 \left((2-p)+ (p+2)\frac{\lambda_1}{s} \right)\fint f^2 {\rm d}\mu_g +O\big(t^3\big)\right] \\
&=&sV^{2/n} \left[ 1-\frac{4t^2}{n-2} \left(1- \frac{\lambda_1}{s/(n-1)} \right)\fint f^2 {\rm d}\mu_g +O\big(t^3\big)\right] \\
&<&sV^{2/n}= \mathcal{E}(g)
\end{eqnarray*}
for all sufficiently small $t \neq 0$. Thus $g$ is not a Yamabe metric, and the Lemma therefore follows by contraposition.
\end{proof}

\begin{Example}\label{zamp}
For any integer $k\geq 2$, let $(X^\ell , h)$ be a compact Einstein manifold of Ricci curvature $k-1$, and,
for an arbitrary real parameter
$\mathsf{t} \geq 1$, consider the Riemannian product metric
\begin{equation}
\label{squeeze}
h_{\mathsf{t}}= g_{\text{\tiny unit}} \oplus \mathsf{t}^{-1} h
\end{equation}
 on $S^k \times X^\ell$, where $g_{\text{\tiny unit}}$ is the standard metric on the unit $k$-sphere $S^k \subset \RR^{k+1}$.
 Because $\big(S^k \times X^\ell, h_{\mathsf{t}}\big)$ has Ricci curvature $\geq k-1$,
 Lichnerowicz's eigenvalue estimate \cite{lichnebook} tells us that it satisfies $\lambda_1 \geq k$; and
 this space therefore actually has $\lambda_1 =k$,
 since any linear functional $\RR^{k+1}\to \RR$,
 restricted to the unit $k$-sphere, and then pulled back to $S^k\times X^\ell$, is a Laplace eigenfunction
 with eigenvalue $k$. On the other hand, the scalar curvature
of $\big(S^k \times X^\ell, h_{\mathsf{t}}\big)$ is $s= (k-1) (k+\mathsf{t}\ell)$.
Thus, inequality~\eqref{eigen} reads $(k+\ell -1) k \geq (k-1) (k+\mathsf{t}\ell)$, and so is
 satisfied iff
$\mathsf{t}\in \big[1,\frac{k}{k-1}\big]$. Consequently, Lemma~\ref{symptom} implies that~$h_{\mathsf{t}}$ is {\em not}
 a Yamabe metric if $\mathsf{t}> \frac{k}{k-1}$. By contrast, when~$\mathsf{t}=1$,
the metric $h_1$ is Einstein, and so, by Obata's theorem~\cite{obata}, is certainly a~Yamabe metric.
\end{Example}

These last observations fit into an interesting broader discussion. On a compact $n$-mani\-fold~$M$, $n \geq 3$,
Obata's theorem guarantees that any {Einstein} metric $g$
is actually a Yamabe metric, so Lemma \ref{symptom} implies that \eqref{eigen} holds for any compact Einstein manifold. However,
 Lichnerowicz gave a Bochner--Weitzenb\"ock proof \cite{lichnebook} in this Einstein case that is not only more elementary, but also sharpens the result, as it shows that
 any compact Einstein manifold $\big(M^n,g\big)$ for which equality holds
 in~\eqref{eigen} must, up to isometry and rescaling, be the standard $n$-sphere $\big(S^n, g_{\text{\tiny unit}}\big)$.
 By contrast, however,
we will now see that equality holds in~\eqref{eigen} for many other Yamabe metrics.

\begin{Lemma} \label{plain}
On a given compact $n$-manifold $M$, let
 $g(t)= [u(t)]^{p-2}g$ be a smooth one-parameter family of metrics in some conformal class $[g]$. If all of these metrics have the same
constant scalar curvature $s= \text{const}$, and if $u(t) = 1 + vt + O\big(t^2\big)$, then the function $v= \dot{u}(0)$ satisfies
\begin{equation*}
\Delta v = \frac{s}{n-1} v
\end{equation*}
with respect to the background metric $g=g(0)$.
\end{Lemma}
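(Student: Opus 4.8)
The plan is to exploit the Yamabe equation \eqref{yammer} directly, treating $g=g(0)$ as a fixed background metric so that its Laplacian $\Delta=\Delta_g$ and its scalar curvature do not vary with $t$. Since $u(0)=1$, the background metric $g$ itself belongs to the family and hence has scalar curvature $s$; and since by hypothesis every $g(t)=[u(t)]^{p-2}g$ also carries the \emph{same} constant scalar curvature $s$, equation \eqref{yammer} applied to each member of the family reads
\begin{equation*}
s\,[u(t)]^{p-1} = (p+2)\,\Delta u(t) + s\,u(t),
\end{equation*}
where the right-hand side is computed entirely with respect to the unchanging background metric $g$.

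Because $\Delta$ and the constant $s$ are independent of $t$, I would simply differentiate this identity in $t$ and evaluate at $t=0$, using $u(0)=1$ and $\dot u(0)=v$. The left-hand side contributes $s(p-1)[u(0)]^{p-2}v = s(p-1)v$, while the right-hand side contributes $(p+2)\Delta v + sv$. Equating the two gives
\begin{equation*}
s(p-1)\,v = (p+2)\,\Delta v + s\,v,
\end{equation*}
so that $(p+2)\,\Delta v = s(p-2)\,v$.

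It then remains only to simplify the numerical coefficient. Recalling that $p=\tfrac{2n}{n-2}$, one has $p-2=\tfrac{4}{n-2}$ and $p+2=\tfrac{4(n-1)}{n-2}$, whence $\tfrac{p-2}{p+2}=\tfrac{1}{n-1}$. Therefore $\Delta v = \tfrac{s}{n-1}\,v$, as claimed. There is no serious analytic obstacle here: the whole argument is a first-order linearization of the semilinear Yamabe equation about the solution $u\equiv 1$, and the only point demanding care is to keep the background metric fixed throughout, so that the operator $\Delta$ and the scalar curvature $s$ appearing in \eqref{yammer} are genuinely $t$-independent and the differentiation is elementary.
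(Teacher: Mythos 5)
Your proposal is correct and follows essentially the same route as the paper: both arguments apply the Yamabe equation \eqref{yammer} with $\widetilde{s}=s$ to each metric $g(t)=[u(t)]^{p-2}g$ over the fixed background $g$, differentiate in $t$ at $t=0$ using $u(0)=1$, $\dot u(0)=v$, and then simplify $\frac{p-2}{p+2}=\frac{1}{n-1}$ via $p=\frac{2n}{n-2}$. No discrepancies to report.
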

\begin{proof} Under these circumstances, the
 Yamabe equation~\eqref{yammer} becomes
\begin{equation*}
s [u(t)]^{p-1} = (p+2) \Delta u(t) + su(t),
\end{equation*}
and differentiating with respect to the parameter $t$ thus yields
\begin{equation*}
s(p-1) [u(t)]^{p-2} \dot{u}(t) = (p+2)\Delta \dot{u}(t)+ s\dot{u}(t).
\end{equation*}
Setting $t=0$, where $u(0)=1$ and $\dot{u}(0)=v$ by assumption, we obtain
\begin{equation*}
\Delta v = s \left(\frac{p-2}{p+2}\right) v,
\end{equation*}
and the substitution $p :=\frac{2n}{n-2}$ therefore, upon simplification, yields
\begin{equation*}
 \Delta v = \frac{s}{n-1} v.
\end{equation*}
 If $v\not\equiv 0$, this function is therefore a Laplace--Beltrami
 eigenfunction, with eigenvalue~$\frac{s}{n-1}$.
\end{proof}

\begin{Lemma} \label{evident}
Let $g$ be a Yamabe metric on a smooth compact manifold $M$ of dimension $n\geq 3$, and suppose
that $(M,g)$ carries a {conformal} Killing field $\xi$ that is {not} actually Killing. Then $(M,g)$ satisfies
\begin{equation*}
\lambda_1 = \frac{s}{n-1},
\end{equation*}
and so saturates the bound provided by inequality \eqref{eigen}.
\end{Lemma}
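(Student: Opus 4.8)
The plan is to manufacture, out of the non-Killing conformal Killing field $\xi$, a one-parameter family of constant-scalar-curvature metrics in the conformal class $[g]$ to which Lemma \ref{plain} applies, and then to check that the eigenfunction it produces is genuinely nonzero and has positive eigenvalue. First I would let $\Phi_t$ denote the (local) flow of $\xi$. Because $\xi$ is conformal Killing, each $\Phi_t$ is a conformal transformation, so $g(t):=\Phi_t^*g$ is a smooth curve of metrics lying in $[g]$ with $g(0)=g$. The crucial observation is that each $g(t)$ is isometric to $g$ via the diffeomorphism $\Phi_t$, and hence automatically has the same constant scalar curvature $s$. Writing $g(t)=[u(t)]^{p-2}g$ with $u(0)=1$, the hypotheses of Lemma \ref{plain} are met, so $v:=\dot{u}(0)$ satisfies $\Delta v=\frac{s}{n-1}v$ with respect to $g$.

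Next I would identify $v$ explicitly and confirm it is nontrivial. Differentiating $\Phi_t^*g=[u(t)]^{p-2}g$ at $t=0$ gives $\mathcal{L}_\xi g=(p-2)\,v\,g$; comparing this with the defining relation $\mathcal{L}_\xi g=2\varphi\,g$ of a conformal Killing field, where $\varphi=\frac{1}{n}\,\mathrm{div}\,\xi$, yields $v=\frac{2}{p-2}\varphi=\frac{n-2}{2}\varphi$. Since $\xi$ is assumed \emph{not} to be Killing, we have $\varphi\not\equiv 0$, and therefore $v\not\equiv 0$. Thus $v$ is a genuine nonzero eigenfunction of $\Delta=\mathrm{d}^*\mathrm{d}$ with eigenvalue $\frac{s}{n-1}$.

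To finish I would rule out the degenerate values of this eigenvalue. Since $\Delta=\mathrm{d}^*\mathrm{d}$ has non-negative spectrum, the existence of the nonzero eigenfunction $v$ forces $\frac{s}{n-1}\geq 0$; and it cannot vanish, because a zero eigenvalue would make $v$ constant, while $\int_M v\,\mathrm{d}\mu_g$ is a nonzero multiple of $\int_M \mathrm{div}\,\xi\,\mathrm{d}\mu_g=0$, forcing $v\equiv 0$, a contradiction. Hence $\frac{s}{n-1}$ is a \emph{strictly positive} eigenvalue of $\Delta$, so by definition of the smallest positive eigenvalue we get $\frac{s}{n-1}\geq\lambda_1$. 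Combined with the reverse inequality $\lambda_1\geq\frac{s}{n-1}$ furnished by Lemma \ref{symptom} (which applies because $g$ is Yamabe), this gives $\lambda_1=\frac{s}{n-1}$, as claimed.

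The conceptual heart of the argument — and the step I would flag as the one requiring genuine care rather than routine computation — is the recognition that pulling $g$ back along its own conformal flow produces isometric, hence constant-scalar-curvature, copies that nonetheless move within the conformal class; the non-Killing hypothesis is then precisely what guarantees that the induced infinitesimal deformation $v$ is nonzero, so that Lemma \ref{plain} delivers a bona fide eigenfunction saturating \eqref{eigen}. Everything else is a matter of assembling the two earlier lemmas and discarding the non-positive possibilities for the eigenvalue.
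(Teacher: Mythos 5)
Your proof is correct and takes essentially the same route as the paper: pull $g$ back along the flow of $\xi$ to get a family of constant-scalar-curvature metrics in $[g]$, apply Lemma~\ref{plain} to produce a nonzero eigenfunction $v$ with eigenvalue $\frac{s}{n-1}$, deduce $\lambda_1 \leq \frac{s}{n-1}$, and combine this with Lemma~\ref{symptom}. The only (minor) divergence is in how positivity of $s$ is established: the paper invokes the uniqueness up to homothety of constant-scalar-curvature metrics in conformal classes with $s\leq 0$, whereas you derive it from the non-negativity of the spectrum of $\Delta = \mathrm{d}^*\mathrm{d}$ together with the observation that $v$ is a multiple of $\mathrm{div}\,\xi$ and hence has zero mean -- an equally valid and slightly more self-contained justification of that step.
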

\begin{proof} If $\Phi_t$ is the $1$-parameter family of diffeomorphisms generated by $\xi$, the pull-backs $ \Phi_t^*g= [u (t)]^{p-2}g$
 form a family of constant-scalar-curvature metrics as in Lemma~\ref{plain}. Moreover
 $v:= \dot{u}(0)\not\equiv 0$, because $(p-2) vg= \mathscr{L}_\xi g\not\equiv 0$.
Since $\Delta v = \frac{s}{n-1} v$ by Lemma \ref{plain}, it thus follows that
$\frac{s}{n-1}$ belongs to the Laplace spectrum of $(M,g)$. Next, observe that, since the $\Phi_t^*g$ provide us with non-homothetic,
 constant-scalar-curvature
metrics in the conformal class $[g]$, the scalar curvature $s$ of $g$ must be positive. But since
$\lambda_1$ is by definition the smallest positive Laplace eigenvalue of $(M,g)$, this implies that $\lambda_1 \leq \frac{s}{n-1}$.
However, since $g$ is by assumption a Yamabe metric, Lemma \ref{symptom} also tells us that
$\lambda_1 \geq \frac{s}{n-1}$.
It therefore follows that $\lambda_1 = \frac{s}{n-1}$, as claimed.
\end{proof}

\begin{Lemma}\label{test}
 Let ${g}$ be a unit-volume homogeneous metric on a smooth compact $n$-manifold $M$, $n\geq 3$, and let
 $\tilde{g}\in [{g}]$ be a Yamabe metric of unit volume in the conformal class of ${g}$. If $\tilde{g}\neq {g}$, then the scalar curvature $s$
 and the first Laplace eigenvalue $\lambda_1$ of $\tilde{g}$ must satisfy
 \begin{equation*}
 \lambda_1 = \frac{s}{n-1}.
\end{equation*}
 \end{Lemma}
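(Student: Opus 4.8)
The plan is to leverage the symmetry of $g$ to produce, for $\tilde g$, a conformal Killing field that is not Killing, after which Lemma~\ref{evident} finishes the job immediately. Let $G$ denote the isometry group of $g$. Since $g$ is homogeneous, $G$ acts transitively on $M$; and because $M$ is connected, the identity component $G_0$ also acts transitively. (Writing $G$ as a countable union of cosets $g_iG_0$, one sees that $M=G\cdot x$ is a countable union of translates of the single orbit $G_0\cdot x$, all of the same dimension; a dimension count forces this dimension to be $n$, so each $G_0$-orbit is open, and the open $G_0$-orbits partition the connected manifold $M$ into a single orbit.)

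Next I would record the elementary observation that every Killing field $\xi$ of $g$ is automatically a conformal Killing field of $\tilde g$, because the conformal Killing property is a conformal invariant. Indeed, writing $\tilde g=u^{p-2}g$ and using $\mathcal{L}_\xi g=0$, one computes $\mathcal{L}_\xi\tilde g=\xi\!\left(u^{p-2}\right)g=\big(\xi\cdot\log u^{p-2}\big)\,\tilde g$, which is a pointwise multiple of $\tilde g$; hence $\xi$ is conformal Killing for $\tilde g$.

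The heart of the argument is then a dichotomy. Either some Killing field $\xi$ of $g$ fails to be Killing for $\tilde g$, or else $\mathcal{L}_\xi\tilde g=0$ for every such $\xi$. In the first case, $\xi$ is a conformal Killing field of the Yamabe metric $\tilde g$ that is not Killing, so Lemma~\ref{evident} yields $\lambda_1=\frac{s}{n-1}$ directly, as claimed. In the second case, $\tilde g$ is invariant under the flow of every Killing field of $g$, and since $G_0$ is generated by these flows, $\tilde g$ is $G_0$-invariant. I would then argue this forces $\tilde g=g$, contradicting the hypothesis $\tilde g\neq g$: from $\tilde g=u^{p-2}g$ and the $G_0$-invariance of both $g$ and $\tilde g$, the conformal factor $u$ must be constant along $G_0$-orbits, hence globally constant by transitivity, so $\tilde g$ is a constant rescaling of $g$; the shared unit-volume normalization then pins down $\tilde g=g$.

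The main point requiring care is precisely this reduction step: confirming that the bare hypothesis $\tilde g\neq g$ truly prevents $\tilde g$ from being $G_0$-invariant, so that a non-Killing conformal symmetry is guaranteed and Lemma~\ref{evident} becomes applicable. The two supporting facts — the conformal invariance of the conformal Killing property, and the rigidity statement that a $G_0$-invariant metric conformal to a homogeneous metric must be a constant multiple of it — are each routine, but it is their interplay that converts $\tilde g\neq g$ into the existence of the needed conformal Killing field that is not Killing.
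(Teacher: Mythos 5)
Your proof is correct and follows essentially the same route as the paper: both reduce the statement to Lemma~\ref{evident} by observing that the Killing fields of $g$ are conformal Killing fields of $\tilde{g}$, and that if all of them were actually Killing for $\tilde{g}$, then $\tilde{g}$ would be invariant under a transitive group action and hence equal to $g$ by the unit-volume normalization, contradicting $\tilde{g}\neq g$. The only difference is cosmetic: the paper takes a compact connected transitive isometry group as part of the definition of homogeneity and asserts the uniqueness of the unit-volume invariant metric in its conformal class, whereas you derive transitivity of the identity component and prove that uniqueness claim explicitly.
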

\begin{proof} By hypothesis, there is a compact connected Lie group $\mathsf{G}$ that acts transitively and isometrically on $(M, {g})$.
The Lie algebra $\mathfrak{g}$ of $\mathsf{G}$ thus acts on $M$ by Killing fields of ${g}$, and these are therefore all conformal Killing fields of
the Yamabe metric $\tilde{g}$. However, since $\tilde{g}\neq {g}$ by assumption, and since the homogeneous metric ${g}$ is the unique unit-volume,
$\mathsf{G}$-invariant metric in its conformal class, there must be some $\xi \in \mathfrak{g}$ which is not
a Killing field for $\tilde{g}$. The claim thus follows by applying Lemma \ref{evident} to $\tilde{g}$. \end{proof}

\begin{Theorem} Let $M= \mathsf{G}/\mathsf{H}$ be a compact homogeneous manifold of dimension $n \geq 3$, where~$\mathsf{G}$ is a compact connected
Lie group and
$\mathsf{H}\subset \mathsf{G}$ is a compact Lie subgroup. Consider the finite-dimensional manifold
\begin{equation*}
\mathscr{M} = \left\{ \text{unit-volume $\mathsf{G}$-invariant metrics } g \text{ on } M \mid \lambda_1 > \frac{s}{n-1} \right\},
\end{equation*}
and define subsets $\mathscr{U}\subset \mathscr{V}\subset \mathscr{M}$ by
\begin{equation*}
\mathscr{V} = \{ g \in \mathscr{M} \mid g \text{ is a Yamabe metric} \},\quad \text{\em and}
\end{equation*}
\begin{equation*}
\mathscr{U} = \{ g \in \mathscr{M} \mid g \text{ is the {\sf unique} unit-volume Yamabe metric in } [g]\}.
\end{equation*}
Then $\mathscr{U}\subset\mathscr{M}$ is open, while $\mathscr{V}\subset \mathscr{M}$ is closed.
\end{Theorem}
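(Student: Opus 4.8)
The plan is to deduce both statements from three ingredients: a \emph{strict} subcriticality bound $Y(M,[g])<\mathscr{Y}\big(S^n\big)$ valid throughout $\mathscr{M}$; the continuity of $g\mapsto Y(M,[g])$ together with a compactness property for the associated Yamabe minimizers; and the already-proved Lemma~\ref{test}. The first I would obtain by observing that the strict inequality $\lambda_1>\frac{s}{n-1}$ defining $\mathscr{M}$ excludes the round conformal class. Indeed, if $(M,[g])$ were conformal to $\big(S^n,g_{\text{\tiny unit}}\big)$, then any compact group acting transitively by conformal transformations of the round sphere is conjugate into $O(n+1)$ and so preserves some round metric; since $g$ is the unique $\mathsf{G}$-invariant unit-volume metric in its class, $g$ would then itself be round, forcing $\lambda_1=n=\frac{s}{n-1}$ and contradicting $g\in\mathscr{M}$. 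Aubin's theorem therefore yields $Y(M,[g])<\mathscr{Y}\big(S^n\big)$ for every $g\in\mathscr{M}$.

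This subcritical bound is precisely what lets the Trudinger--Aubin--Schoen theory run uniformly: $Y(M,[\,\cdot\,])$ is continuous along $\mathscr{M}$, and the unit-volume Yamabe minimizers of the classes $[g']$, for $g'$ in a compact subset of $\mathscr{M}$, stay bounded in $C^{2,\alpha}$ with subsequential limits again minimizing. Establishing this compactness --- ruling out concentration by playing the strict bound $Y<\mathscr{Y}\big(S^n\big)$ against the usual bubbling dichotomy, and then upgrading the conformal factors to $C^{2,\alpha}$ via elliptic regularity applied to the Yamabe equation~\eqref{yammer} --- is the analytic heart of the argument, and the step I expect to be most delicate; everything else is comparatively soft.

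For the closedness of $\mathscr{V}$, I would use that a homogeneous metric has constant scalar curvature, so a unit-volume $g\in\mathscr{M}$ satisfies $\mathscr{E}(g)=s$ and is a Yamabe metric exactly when $s=Y(M,[g])$. As $s$ is manifestly continuous on $\mathscr{M}$ and $Y(M,[\,\cdot\,])$ is continuous by the preceding paragraph, $\mathscr{V}=\{g\in\mathscr{M}\mid s-Y(M,[g])=0\}$ is the zero set of a continuous function on $\mathscr{M}$, hence closed.

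For the openness of $\mathscr{U}$, fix $g\in\mathscr{U}$ and a sequence $g_i\to g$ in $\mathscr{M}$; I would show $g_i\in\mathscr{U}$ for large $i$. Let $\tilde g_i\in[g_i]$ be any unit-volume Yamabe metric. By the compactness above, a subsequence of the $\tilde g_i$ converges in $C^2$ to a unit-volume Yamabe metric of $[g]$, which by the defining property of $\mathscr{U}$ can only be $g$; since this holds for every subsequence, $\tilde g_i\to g$ in $C^2$. Because $g$ satisfies the \emph{strict} inequality $\lambda_1>\frac{s}{n-1}$ and because $\lambda_1$ and $s$ vary continuously under $C^2$ convergence of metrics, the approximants inherit $\lambda_1(\tilde g_i)>\frac{s(\tilde g_i)}{n-1}$ for $i$ large. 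But Lemma~\ref{test}, applied with the homogeneous metric $g_i$ and the Yamabe metric $\tilde g_i$, says that $\tilde g_i\neq g_i$ would force $\lambda_1(\tilde g_i)=\frac{s(\tilde g_i)}{n-1}$; the strict inequality thus leaves only $\tilde g_i=g_i$. As $\tilde g_i$ was an arbitrary unit-volume Yamabe metric in $[g_i]$, the metric $g_i$ is the unique such one, i.e.\ $g_i\in\mathscr{U}$, and $\mathscr{U}$ is open.
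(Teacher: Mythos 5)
Your proposal is correct and follows essentially the same route as the paper: closedness of $\mathscr{V}$ via the zero locus of the continuous function $Y(M,[g])-s_g$, and openness of $\mathscr{U}$ by combining compactness of unit-volume Yamabe minimizers over converging conformal classes with Lemma~\ref{test} and the continuity of $\lambda_1$ and $s$ under $C^2$ convergence. The only differences are organizational: you prove openness directly rather than proving $\mathscr{U}^\complement$ closed (so you transfer the strict inequality from the limit to the approximating minimizers, where the paper instead passes the equality $\lambda_1=\frac{s}{n-1}$ from the minimizers to the limit), and you propose to re-derive the compactness/continuity input that the paper simply cites from \cite[p.~716]{bwz} and \cite[Proposition~2.1]{sch}, a step you rightly flag as the analytic heart.
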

\begin{proof} The complement $\mathscr{U}^\complement = \mathscr{M} -\mathscr{U}$ of $\mathscr{U}$ consists
of those $g\in \mathscr{M}$ for which there is a~unit-volume Yamabe metric $\tilde{g}\in [g]$ with $\tilde{g}\neq g$.
By Lemma~\ref{test}, this complement can therefore also be described as
\begin{equation*}
\mathscr{U}^\complement =\left\{ g\in \mathscr{M} \mid \exists \text{ unit-volume Yamabe $\tilde{g}\in [g]$ for which } \lambda_1 = \frac{s}{n-1}\right\},
\end{equation*}
since, by definition, any $g\in \mathscr{M}$ satisfies $\lambda_1 \neq \frac{s}{n-1}$.

We will now use this observation to prove that $\mathscr{U}^\complement$ is closed. Indeed, suppose that $g_j\in \mathscr{U}^\complement$
is a sequence of unit-volume homogeneous metrics with $g_j \to g$ for some $g\in \mathscr{M}$.
By definition, we may then choose a sequence $\tilde{g}_j\in [g_j]$ of unit-volume, non-homogeneous
 Yamabe metrics in the corresponding conformal classes.
However, since $g\in\mathscr{M}$ satisfies $\lambda_1 > \frac{s}{n-1}$ by hypothesis, it follows that
$(M,g)$ is certainly not a constant-sectional-curvature $n$-sphere, and a~compactness argument~\cite[p.~716]{bwz}
therefore guarantees that a subsequence
$\tilde{g}_{j_i}$ of these Yamabe metrics must converge in $C^{2}$ to some unit-volume Yamabe metric $\tilde{g}\in [g]$; cf.\ \cite[Proposition~2.1]{sch}.
It then follows that $\lambda_1 (\tilde{g}_{j_i}) \to \lambda_1 (\tilde{g})$ and $s(\tilde{g}_{j_i})\to s(\tilde{g})$.
However, since $\lambda_1 (\tilde{g}_{j_i}) = \frac{s}{n-1}(\tilde{g}_{j_i})$, this then implies that $\lambda_1 (\tilde{g})= \frac{s}{n-1} (\tilde{g})$,
 too. But $\lambda_1 ({g})> \frac{s}{n-1} (g)$ by our definition of $\mathscr{M}$, so it follows that~$\tilde{g}\neq g$. Hence
 $g\in \mathscr{U}^\complement$. We have
thus shown that $\mathscr{U}^\complement$ is closed, and hence that $\mathscr{U}$ is open.

On the other hand \cite[Proposition~4.31]{bes}, the Yamabe constant $Y(M, [g])$ is a continuous function of $g$, as is, more obviously, the scalar curvature $s_g$. Since we have
normalized each $g\in \mathscr{M}$ to have unit volume, the subset $\mathscr{V}\subset \mathscr{M}$ is exactly
the zero locus of the continuous function $Y(M, [g])-s_g$. It thus follows that $\mathscr{V}$ is closed, as claimed.
\end{proof}

Dropping the convenient but unnecessary unit-volume constraint, and recalling our discussion of \eqref{squeeze} in Example \ref{zamp}, we now apply this theorem to
the family $\big(S^2 \times S^2, h_{\mathsf{t}}\big)$, $\mathsf{t}\in [1,2]$, where $h_{\mathsf{t}}
=g_{\text{\tiny unit}}\oplus \mathsf{t}^{-1} g_{\text{\tiny unit}}$.
It follows that there is a maximal $\mathsf{T} \in (1,2]$
such that
$h_{\mathsf{t}}$ is, up to homothety,
the {unique} Yamabe metric in $[h_{\mathsf{t}}]$ for every $\mathsf{t}\in [1,\mathsf{T})$. However, it then also follows, by continuity, that
$h_{\mathsf{T}}$ is itself a Yamabe metric. To get an affirmative answer to the
the questions posed in the Introduction, it would thus suffice to show, if $\mathsf{T} < 2$, that $h_{\mathsf{T}}$ is necessarily, up to homothety,
the {\sf unique} Yamabe metric in $[h_{\mathsf{T}}]$. The fact that $h_{\mathsf{T}}$ is a Yamabe, K\"ahler metric on a $4$-manifold does seem to offer some hope of proving just this,
especially in light of the suggestive results of~\cite{gursky, gl1}. Nonetheless, this problem remains, for the present, just a topic for further investigation.

\section{Static potentials}

We now conclude this article with a few remarks relating our discussion to the fascinating topic of static potentials, which originally arose
from the work of Bourguignon \cite{bourguignon} and Fischer--Marsden~\cite{fischer-marsden}, and later led to the beautiful results
of Lafontaine~\cite{lafontaine} and others~\cite{ambrozio,corvino,seshadri}.
The motivating question here is to ask for the precise circumstances under which the linearization
\begin{equation}\label{lily}
Ds\colon \ \Gamma \big({\odot}^2 T^*M\big)\to C^{\infty}(M)
\end{equation}
 of the scalar curvature, at a given Riemannian metric $g$ on a smooth compact $n$-manifold $M$,
 fails to be surjective. Since the $t$-derivative of the scalar curvature $s$, at $t=0$, for a family
 $g+ t h$ is
 \begin{equation*}
\dot{s}=\Delta \big(h_{ab}g^{ab}\big) + \nabla^a\nabla^bh_{ab} - r^{ab}h_{ab},
\end{equation*}
integration by parts reveals that
the co-kernel of~\eqref{lily} exactly consists of functions $f$ that satisfy the overdetermined linear equation
 \begin{equation}\label{static}
0= (\Delta f)g_{ab} + \nabla_a \nabla_b f - fr_{ab}.
\end{equation}
A solution $f\not\equiv 0$ of \eqref{static} is called a {\em static potential}
on $(M,g)$, in order to highlight the remarkable fact \cite[Corollary~9.107, with $p=1$, $\hat{\lambda}=0$]{bes} that
\eqref{static} is precisely equivalent to demanding that
\begin{equation}\label{warp}
\widehat{g}=g+f^2 \,{\rm d}t^2
\end{equation}
 be an Einstein metric on $(M-Z) \times S^1$, where $Z:=f^{-1}(0)$.

One simple strategy for guessing a solution $f$ of \eqref{static} would be to take the function $f\not\equiv 0$ to be constant.
However, inspection reveals that this ansatz works
 if and only if $(M,g)$ is Ricci-flat. While this does show that the linearized scalar curvature \eqref{lily} fails to be surjective on any compact Ricci-flat manifold,
 there is really nothing else to say about this ``trivial'' case. {We will therefore assume henceforth that $f$ is non-constant}.

Next, notice that taking the trace of \eqref{static} yields the identity
\begin{equation}
\label{basso}
\Delta f = \frac{s}{n-1} f .
\end{equation}
We may therefore rewrite \eqref{static}
as
\begin{equation}
\label{thingone}
\nabla_a \nabla_b f = \varrho_{ab} f,
\end{equation}
where $\varrho$ is defined, in terms of the scalar curvature $s$ and the Ricci tensor $r$ or trace-free Ricci tensor $\mathring{r}$, to be
\begin{equation*}
\varrho_{ab}:= r_{ab} - \frac{s}{n-1} g_{ab}= \mathring{r}_{ab} - \frac{s}{n(n-1)} g_{ab}.
\end{equation*}
Equivalently, $f$ satisfies the linear second-order ODE
\begin{equation}\label{geode}
\frac{{\rm d}^2}{{\rm d}t^2}f = \varrho (\mathsf{v} , \mathsf{v})f
\end{equation}
along every unit-speed geodesic, where $\mathsf{v}={\rm d}/{\rm d}t$ denotes the unit tangent vector.
Since we have assumed that our connected Riemannian manifold $(M,g)$ is compact, and hence
geodesically complete, and since
$f\not\equiv 0$, it follows that $df$ must be non-zero at every point of $Z=f^{-1}(0)\subset M$;
otherwise, any $q\in M$ could be joined by some geodesic segment to a point $p\in Z$
at which the initial-value pair $(f(p), f^\prime (p))$ for \eqref{geode} vanished, thereby forcing~$f$ to vanish at~$q$, too.
In particular, $Z$ is a smooth hypersurface. Moreover, $Z$ is totally geodesic, because the same initial-value argument for~\eqref{geode} shows that any geodesic that is initially tangent to~$Z$
 must remain confined to $Z=f^{-1}(0)$.
Finally, notice that, since
 \begin{equation*}
\nabla_a |\nabla f|^2 = {2}\big(\nabla^b f\big) \nabla_a \nabla_b f= 2f\varrho_{ab}\big(\nabla^b f\big)
\end{equation*}
vanishes along $Z$, the length of the normal vector field~$\nabla f$ is actually constant on each connected component of~$Z$.

We next apply the divergence operator $-\nabla^a$ to both sides of \eqref{static}. Remembering Bochner's formula for the Hodge Laplacian on $1$-forms, and recalling that $d$ commutes with the Hodge Laplacian, the Bianchi identity then yields
\begin{align*}
0&= -\nabla^a (g_{ab}\Delta f) - \nabla^a\nabla_a \nabla_b f + \nabla^a(fr_{ab})
 -\nabla_b \Delta f + \nabla^*\nabla (\nabla_b f )+ r_{ab}\nabla^af + f\nabla^ar_{ab}
\\&=-\nabla_b ({\rm d}+{\rm d}^*)^2 f + ({\rm d}+{\rm d}^*)^2 \nabla_b f+ \tfrac{1}{2} f\nabla_b s
\\&= \tfrac{1}{2} f\nabla_b s .
\end{align*}
This shows that $\nabla s=0$ on the set $M-Z$ where $f \neq 0$. However, since $Z$ is just a hypersurface,
$M-Z\subset M$ is dense, and it therefore follows by continuity that $\nabla s=0$ everywhere; and since~$M$ has been assumed to be connected, this then implies that the scalar curvature $s$ is constant.
Consequently, \eqref{basso} now tells us that $f$ is an eigenfunction of the Laplacian, with eigenvalue~$\frac{s}{n-1}$.
This in particular gives the scalar curvature a Rayleigh-quotient interpretation:
\begin{equation*}
\frac{s}{n-1} = \frac{\int_M f\Delta f\, {\rm d}\mu}{ \int_M f^2\, {\rm d}\mu} = \frac{\int_M|\nabla f|^2\, {\rm d}\mu}{ \int_M f^2\, {\rm d}\mu}.
\end{equation*}
Our assumption that $f$ is {\em non-constant} therefore constrains the scalar curvature $s$ to be {positive},
and so forces the average value of $f$ to vanish:
\begin{equation*}
\fint_M f\, {\rm d}\mu = \frac{n-1}{s}\fint_M \Delta f\, {\rm d}\mu = \frac{n-1}{sV}\int_M \Delta f\, {\rm d}\mu =0.
\end{equation*}
Since the smooth function $f$ must achieve its average value, there must be some point $p\in M$ where $f$ vanishes, and we
therefore conclude that
 the totally geodesic hypersurface $Z= f^{-1}(0)$ is necessarily non-empty.

A particularly attractive special case of the above occurs when $Z$ is connected.
The Mayer--Vietoris sequence then guarantees that $M-Z$ must have exactly two connected components, namely the two
 regions respectively defined by $f>0$ and $f< 0$. On the other hand, the connectedness of $Z$ implies that $|\nabla f|$ is a non-zero constant on $Z$,
so the linearity of \eqref{static} allows us, by replacing~$f$ with a constant multiple, to arrange that
$|\nabla f|\equiv 1$ along $Z$. Setting $M_+ = f^{-1} ([0,\infty))$, we can now construct a smooth compact connected $(n+1)$-manifold
by setting $N= \big( M_+\times S^1\big)/{\sim}$,
where the equivalence relation
collapses $\partial \big( M_+\times S^1\big)= Z\times S^1$ down to $Z$, and~\eqref{warp} then defines a smooth Einstein metric $\widehat{g}$ on $N^{n+1}$. Of course,
replacing~$f$ with~$-f$ also gives us a second Einstein $(n+1)$-manifold, which in principle might not be isometric to the first. Note that the connectedness of~$Z$ plays an important role in this construction, since otherwise $|\nabla f|$ could take different values on different connected components, leading to unavoidable
edge-cone singularities in $\big(N,\widehat{g}\big)$; for details, see~\cite{ambrozio}.

Following in the tracks of Lafontaine \cite[Section~B]{lafontaine}, we next observe
that the metrics of~\eqref{squeeze}, for the critical parameter value $\mathsf{t}= \frac{k}{k-1}$,
provide us with examples of manifolds that carry static potentials.
Thus, for any positive integer $k$, let $\big(S^k, g_{\text{\tiny unit}}\big)$ be the standard
unit $k$-sphere, and let $f\colon S^k \to \RR$ be the restriction of the Euclidean $x^1$ coordinate to $S^k \subset \RR^{k+1}$.
The gradient $\eta= \nabla f$ is then a conformal Killing field on~$S^k$ with
\begin{equation*}
\mathscr{L}_\eta g_{\text{\tiny unit}} = -2f g_{\text{\tiny unit}},
\end{equation*}
and one therefore has
\begin{equation*}
\nabla \nabla f = - f g_{\text{\tiny unit}}
\end{equation*}
on $\big(S^k, g_{\text{\tiny unit}}\big)$. Now let $\big(X^{\ell}, \widehat{h}\big)$ be a compact $\ell$-dimensional Einstein manifold of Ricci curvature~$k$,
and set
\begin{equation*}
\big(M^{n}, g\big) = \big(S^k, g_{\text{\tiny unit}}\big) \times \big(X^\ell, \widehat{h}\big),
\end{equation*}
so that $n = k+\ell$ and $g=g_{\text{\tiny unit}} \oplus \widehat{h}$.
We now reinterpret $f$ as a function $f\colon M \to \RR$, by pulling it back from the first factor
of the Riemannian product. In this context, the gradient $\eta= \nabla f$ of this function
then satisfies
\begin{equation*}
\mathscr{L}_\eta g = -2f g_{\text{\tiny unit}},
\end{equation*}
so that we still have
\begin{equation}
\label{cueball}
\nabla \nabla f = - f g_{\text{\tiny unit}}
\end{equation}
on $(M, g)$. On the other hand, $\big(S^k \times X,g\big)$ has Ricci tensor
\begin{equation*}
r = (k-1) g_{\text{\tiny unit}} \oplus k \widehat{h}=kg-g_{\text{\tiny unit}}
\end{equation*}
and hence has scalar curvature
\begin{equation*}
s = k(k-1) + k\ell = k (n-1).
\end{equation*}
As a consequence, $M=S^k \times X$ has
\begin{equation*}
\varrho := r - \frac{s}{n-1}g= (k g - g_{\text{\tiny unit}}) - k g = -g_{\text{\tiny unit}}.
\end{equation*}
Equation \eqref{cueball} can therefore be written as
\begin{equation*}
\nabla \nabla f = \varrho f,
\end{equation*}
so that $f$ satisfies \eqref{thingone}, and is therefore a static potential on $(M,g)$.

Finally, what are the Einstein $(n+1)$-manifolds that arise from \eqref{warp} in the context of these specific examples? They are in fact just the
the Riemannian products
\begin{equation*}
\big(N, \widehat{g}\big)= \big(S^{k+1} , g_{\text{\tiny unit}}\big) \times \big(X, \widehat{h}\big).
\end{equation*}
For example, to get the metric $h_2$ on $S^2 \times S^2$ featured in the technical question of the introduction,
one first divides a product Einstein metric on $S^3 \times S^2$ by a rotation
of the $S^3$ factor around an equatorial~$S^1$, and then doubles the result. In any such example, the upstairs metric~$\widehat{g}$ is Yamabe by Obata's theorem,
and this then implies an interesting weighted Sobolev--Kondrakov inequality downstairs. Unfortunately, however, this inequality does not by itself appear sufficient to imply that the downstairs metric $g$ is itself Yamabe.

\subsection*{Acknowledgements}
It is a pleasure to dedicate this article to Jean-Pierre Bourguignon, on the occasion of
his seventy-fifth birthday. This research was supported in part by NSF grant DMS--2203572.

\pdfbookmark[1]{References}{ref}
\LastPageEnding

\end{document}